\documentclass[11pt,reqno]{amsart}
\usepackage[letterpaper, portrait, margin=.9in, headheight=0pt]{geometry}

\usepackage{mathtools}
\usepackage{enumerate}
\usepackage{amsmath,amsthm,amssymb,amsfonts}
\usepackage{stmaryrd}
\usepackage{hyperref}
\usepackage[normalem]{ulem}
\usepackage{soul}
\usepackage{xcolor}
\usepackage{tikz}
\usepackage{tikz-cd}
\usetikzlibrary{calc, positioning, fit, shapes.misc,arrows,bending}
\usepackage{subcaption}
\usepackage{blkarray}
\usepackage{cleveref}
\newtheorem{thm}{Theorem} 
\newtheorem{cor}[thm]{Corollary}
\theoremstyle{definition}

\newtheorem{defi}[thm]{Definition}

\newtheorem{que}[thm]{Question}

\makeatletter
\DeclareRobustCommand{\cev}[1]{%
  \mathpalette\do@cev{#1}%
}
\newcommand{\do@cev}[2]{%
  \fix@cev{#1}{+}%
  \reflectbox{$\m@th#1\vec{\reflectbox{$\fix@cev{#1}{-}\m@th#1#2\fix@cev{#1}{+}$}}$}%
  \fix@cev{#1}{-}%
}
\newcommand{\fix@cev}[2]{%
  \ifx#1\displaystyle
    \mkern#23mu
  \else
    \ifx#1\textstyle
      \mkern#23mu
    \else
      \ifx#1\scriptstyle
        \mkern#22mu
      \else
        \mkern#22mu
      \fi
    \fi
  \fi
}

\makeatother

\newcommand{\dvecEG}{%
  \mathrel{\vbox{\offinterlineskip\ialign{%
    ##\hfil\cr
    $\scriptscriptstyle\hspace{.2ex}\leftrightarrows$\cr
    \noalign{\kern-.1ex}
    $E(G)$\cr
    }}}}

\usepackage{titlesec}

\titleformat{\section}
       {\normalfont
       \fontsize{14}{17}\bfseries}{\thesection}{1em}{}

\titleformat{\subsection}
       {\normalfont\fontsize{12}{17}\bfseries}{\thesubsection}{1em}{}

\title{A note on kernel-perfect orientations and DP-colorings from derangement assignments}

\pagestyle{plain}

\author{Ian Gossett}

\begin{document}

\setstcolor{red}

\begin{abstract} 
We prove a generalization of the well-known Bondy-Bopanna-Siegel Lemma to DP-colorings of graphs from a class of correspondence assignments that we call \textit{derangement assignments}. Since DP-colorings from derangement assignments generalize zero-free list colorings of signed graphs, this yields an orientation theorem for zero-free signed list colorings, as well. 
\end{abstract}
\maketitle
\section{Introduction}
 DP-colorings of graphs, originally called correspondence colorings, are a generalization of list colorings that were introduced by Dvorak and Postle in \cite{MR3758240}. (DP-colorings also generalize several other previously studied types of graph labelings. See, e.g., \cite{MR4233786} for a discussion.) One of the main research goals pertaining to DP-colorings has been to determine similarities and differences between list colorings and DP-colorings (e.g. \cite{MR4262023},\cite{MR3518419},\cite{MR3889157},\cite{MR3758240}). In particular, two well-known orientation theorems for list colorings, the Alon-Tarsi theorem \cite{MR1179249} and the Bondy-Boppana-Siegel lemma (see \cite{MR1179249},\cite{MR1309363}), are known not to apply to DP-colorings `as-is,' in the sense that they do not hold true if we replace list colorings with DP-colorings in their statements.

 Even so, in \cite{gossett2023orientation}, several analogs of the Alon-Tarsi theorem were proved for certain types of DP-colorings. 
 In this note, we prove a generalization of the Bondy-Boppana-Siegel lemma to DP-colorings from a particular class of correspondence assignments, which we call \textit{derangement assignments}. We then give an application of this theorem to signed list colorings of signed graphs, as defined in \cite{JIN2016234}.
 
\section{Background and Definitions}
\subsection{Basic Definitions and the Bondy-Boppana-Siegel Lemma}

 By a \textit{graph}, we will mean a finite, loopless multigraph and by \textit{digraph} we will mean a finite, loopless multidigraph, though we will sometimes be explicit about this for the sake of clarity. By \textit{loopless}, we mean that no vertex is self adjacent. If $G=(V,E)$ is a graph, and $e\in E$, we write $e= \{v,w\}$ to specify that $v$ and $w$ are the endpoints of $e$.  (This is a slight abuse of notation, since the endpoints do not determine $e$ uniquely.) Similarly, when $D=(V,\vec{E})$ is a digraph, we write $\vec{e}=(v,w)$ to express that $\vec{e}$ is a directed edge from $v$ to $w$.

If $G$ is a graph, we write $d_G(v)$ to denote the degree of a vertex $v$ of $G$. If $D$ is a digraph, we let $d^+_D(v)$ denote the out-degree of $v$ in $D$, and let $d^-_D(v)$ denote the in-degree of $v$ in $D$. In this note, we consider \textit{biorientations} of graphs, where edges of a graph are allowed to be oriented in both directions; a biorientation of a graph $G=(V,E)$ is a directed graph $D=(V,\vec{E})$ such that $V(D)=V(G)$, and for each $e=\{u,v\}\in E(G)$ there is an edge $\vec{e}=(u,v)\in \vec{E}(D)$, $\cev{e}=(v,u)\in \vec{E}(D)$, or both,  and if $u$ and $v$ are non-adjacent in $G$, they are non-adjacent in $D$. If $G$ is a graph and $A\subseteq V$, denote the induced subgraph of $G$ whose vertex set is $A$ by $G[A]$ (and similarly by $D[A]$ for digraphs). Given a biorientation $D=(V,\vec{E})$ of $G$, define $E_{D2}(G)\subseteq E(G)$ to be set of all edges $e$ of $G$ such that $e$ is oriented in both directions in $D$.

\begin{defi}
    Let $G$ be graph and $L=\{L(v)\}_{v\in V(G)}$ be an assignment of lists to the vertices of $G$. If there exists a function $\varphi:V\rightarrow \bigcup_{v\in V}{L(v)} $ such that $\varphi(v)\in L(v)$ for each $v\in V$, and $\varphi(u)\neq \varphi(v)$ for all $\{u,v\}\in E(G)$, then $\varphi$ is called an $ {L}$\textit{-coloring} of $G$, and we say that $G$ is $ {L}$\textit{-colorable}. 
\end{defi}

\begin{defi}
    Let $G=(V,E)$ be a graph, and suppose that $f:V(G)\rightarrow \mathbb{N}$. $G$ is $ {f}$\textit{-choosable} if for any assignment of lists $L=\{L(v)\}_{v\in V}$ such that $|L(v)|\geq f(v)$ for each $v$, there exists an $L$-coloring of $G$. We say that $G$ is $ {k}$\textit{-choosable} if $G$ is $f$-choosable for the constant function $f(v)=k$. 
\end{defi}
\begin{defi}
Let $D=(V,\vec{E})$ be a digraph. A subset $U\subseteq V$ is a \textit{kernel} of $D$ if $U$ is an independent set and for each $v\in V\setminus U$, there exists some $u\in U$ such that $(v,u)\in \vec{E}$. A digraph is called \textit{kernel-perfect} if each of its induced subgraphs has a kernel.
\end{defi}

\begin{thm} (Bondy-Boppana-Siegel, see \cite{MR1179249},\cite{MR1309363}) \label{thm:BBS}
    Let $G=(V,E)$ be a graph, let $D$ be a kernel perfect biorientation of $G$, and define $f:V(G)\rightarrow \mathbb{N}$ by $f(v)= d^+_D(v)+1$ for each $v\in V$. Then $G$ is $f$-choosable.
\end{thm}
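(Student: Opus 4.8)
The plan is to argue by induction on $|V(G)|$, coloring one color class at a time using a kernel and keeping the induction alive via the fact that induced subdigraphs of a kernel-perfect digraph are again kernel-perfect. The base case $|V(G)|=0$ is vacuous. For the inductive step, fix a list assignment $L$ with $|L(v)|\geq f(v)=d^+_D(v)+1$ for all $v$. Since $d^+_D(v)+1\geq 1$, every list is nonempty, so I can pick a color $c$ that appears in some list and set $A=\{v\in V: c\in L(v)\}$. Because $D$ is kernel-perfect, the induced subdigraph $D[A]$ has a kernel $U$, which is an independent set in $G$.

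First I would color every vertex of $U$ with $c$; this is legitimate because $U$ is independent, so no edge of $G[U]$ becomes monochromatic. I then pass to $G'=G[V\setminus U]$ equipped with the biorientation $D'=D[V\setminus U]$ and the reduced lists $L'(v)=L(v)\setminus\{c\}$ for $v\in V\setminus U$. The crucial point is to verify that $L'$ still satisfies the degree condition relative to $D'$, that is, $|L'(v)|\geq d^+_{D'}(v)+1$ for every $v\in V\setminus U$.

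This splits into two cases. If $c\notin L(v)$, then no color is removed, so $|L'(v)|=|L(v)|\geq d^+_D(v)+1\geq d^+_{D'}(v)+1$, since deleting vertices cannot increase out-degree. If $c\in L(v)$, so that $v\in A\setminus U$, then the kernel property of $U$ in $D[A]$ supplies some $u\in U$ with $(v,u)\in\vec{E}(D)$; this out-neighbor of $v$ is deleted, whence $d^+_{D'}(v)\leq d^+_D(v)-1$, and therefore $|L'(v)|=|L(v)|-1\geq d^+_D(v)\geq d^+_{D'}(v)+1$. This compensation of the unit drop in list length by a unit drop in out-degree, guaranteed precisely by the kernel condition, is the heart of the argument and the step I expect to require the most care.

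Finally, since every induced subdigraph of the kernel-perfect digraph $D$ is kernel-perfect, $D'$ is a kernel-perfect biorientation of $G'$ with associated function $f'(v)=d^+_{D'}(v)+1$. By the inductive hypothesis, $G'$ admits an $L'$-coloring $\varphi'$, and I would extend it to $G$ by assigning $c$ to each vertex of $U$. The resulting map $\varphi$ is an $L$-coloring: there are no edges inside $U$, edges inside $V\setminus U$ are properly colored by $\varphi'$, and for any edge $\{u,v\}$ with $u\in U$ and $v\in V\setminus U$ we have $\varphi(v)=\varphi'(v)\in L(v)\setminus\{c\}$, so $\varphi(v)\neq c=\varphi(u)$. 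This completes the induction and shows that $G$ is $f$-choosable.
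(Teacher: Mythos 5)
Your proof is correct and is the standard kernel-induction argument: fix a color $c$, take a kernel $U$ of $D[A]$ where $A$ is the set of vertices whose lists contain $c$, color $U$ with $c$, delete $c$ from the remaining lists, and recurse, using the kernel condition to offset the lost color with a lost out-neighbor. The paper states Theorem~\ref{thm:BBS} as a cited result without proof, but its proof of the generalization Theorem~\ref{thm:BBSDP} follows exactly this strategy, of which your argument is the special case where every edge is straight.
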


\subsection{DP-Colorings and Derangement Assignments}

The following definition was first given in \cite{MR3758240} and was first considered for multigraphs in \cite{MR3686937}.

\begin{defi} \label{def:cor}
  Let $G=(V,E)$ be a multigraph. \vspace{.2cm} 

\begin{itemize}

\item A \textit{correspondence assignment} for $G$ consists of an assignment of lists $L=\{L(v)\}_{v\in V}$ and set of partial matchings $C=\{C_e\}_{e\in E}$ that assigns to every edge $e=\{u,v\}$ a partial matching $C_e$ between $\{u\}\times L(u)$ and $\{v\}\times L(v)$.\vspace{.2cm} 

\item An $(L,C)$-coloring of $G$ is a function $\varphi$ that to each vertex $v\in V(G)$ assigns a color $\varphi(v)\in L(v)$, such that for every $e=\{u,v\}\in E(G)$, the vertices $(u,\varphi(u))$ and $(v,\varphi(v))$ are non-adjacent in $C_e$. If such an $(L,C)$ coloring exists, we say that $G$ is $(L,C)$-colorable. 

\end{itemize}

 Colorings of the above type are called \textit{DP-colorings}. 
\end{defi}

\Cref{fig:badcor} shows a correspondence assignment $(L,C)$ on $C_4$, with $L(v)=\{1,2\}$ for each $v\in V(C_4)$. As has become the custom, we represent each list $L(v)$ ``inside" its corresponding vertex, and draw the matchings for each edge.
\begin{figure}[h]

\caption{}
\vspace{.1cm}
\label{fig:badcor} \begin{center}
\begin{tikzpicture}[scale=1.8]
\node at (1.75,.5){} ;

\node at (-.2,-.2){1};
\node at (-.4,-.4){2};
\node at (-.2,1.2){1};
\node at (-.4,1.4){2};
\node at (1.2,1.2){1};
\node at (1.4,1.4){2};
\node at (1.2,-.2){1};
\node at (1.4,-.4){2};

\draw[] (-.2,-.1)--(-.2,1.1);
\draw[] (-.4,-.3)--(-.4,1.3);
\draw[] (-.1,1.2)--(1.1,1.2);
\draw[] (-.3,1.4)--(1.3,1.4);
\draw[] (-.1,-.2)--(1.1,-.2);
\draw[] (-.3,-.4)--(1.3,-.4);

\draw[] (1.2,1.1)--(1.4,-.3);
\draw[] (1.4,1.3)--(1.2,-.1);


the\draw[rotate around={45:(-.3,1.3)},gray] (-.3,1.3) ellipse (5pt and 10pt);
\draw[rotate around={-45:(1.3,1.3)},gray] (1.3,1.3) ellipse (5pt and 10pt);
\draw[rotate around={-45:(-.3,-.3)},gray] (-.3,-.3) ellipse (5pt and 10pt);
\draw[rotate around={45:(1.3,-.3)},gray] (1.3,-.3) ellipse (5pt and 10pt);
\end{tikzpicture}
\end{center}
\end{figure}

The correspondence assignment in \Cref{fig:badcor} demonstrates that the Bondy-Boppana-Siegel lemma does not apply to DP-coloring if we replace list colorings with DP-colorings in the statement of the lemma. 
Since the cyclic orientation $D$ of $C_4$ is kernel-perfect, and has $d^+_{D}(v)=1$ for each $v$, and the above correspondence assignment is such that each list $L(v)$ has length $d^+_D(v)+1=2$, but $G$ is not $(L,C)$-colorable. 
 
List colorings are the special case of DP-colorings where each partial matching $C_{\{u,v\}}$ is given by $\{(u,c_1),(v,c_2)\}\in C_{\{u,v\}}$ if and only if $c_1=c_2$. Following the terminology in \cite{MR3758240}, if $(L,C)$ is a correspondence assignment on $G$ and $e=\{u,v\}\in E(G)$ is such that $\{(u,c_1),(v,c_2)\}\in C_{e}$ implies $c_1=c_2$, then $e$ is said to be a \textit{straight} edge (with respect to $(L,C)$). If $e\in E(G)$ is not straight, we will say that $e$ is \textit{twisted} (with respect to $(L,C))$.  

Given a correspondence assignment $(L,C)$ on a graph $G$, let $E_S(G)\subseteq E(G)$ denote the set of straight edges of $G$ and let $E_T(G)\subseteq E(G)$ denote the set of twisted edges. Define $G_S=(V(G),E_S(G))$, and define $G_T=(V(G),E_T(G))$.  

\begin{defi}\label{def:derangement}
    Let $(L,C)$ be a correspondence assignment on a multigraph $G$ and let $e=\{v,w\}$ be an edge of $G$. The matching $C_{e}$ is called a \textit{partial derangement} if for all $c\in L(v)\cap L(w)$, $\{(v,c),(w,c)\}\notin C_e$. If, for every twisted edge $e\in E_T(G)$, we have that $C_e$ is a partial derangement, we call $(L,C)$ a \textit{derangement assignment} on $G$.
\end{defi}

\section{An Orientation Theorem for Derangement Assignments}

The following theorem is a generalization of \Cref{thm:BBS} to DP-colorings from derangement assignments. 

\begin{thm}\label{thm:BBSDP} Let $G$ be a multigraph, and $(L,C)$ a derangement assignment on $G$. If there exists a biorientation $D$ of $G$ such that $D$ induces a kernel-perfect biorientation of $G_S$, $E_T(G)\subseteq E_{D2}(G)$, and $|L(v)|\geq d^+_D(v)+1$ for each $v\in V(G)$, then $G$ is $(L,C)$-colorable. \end{thm}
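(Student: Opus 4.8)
The plan is to adapt the classical kernel-based proof of \Cref{thm:BBS} to the DP setting, inducting on $|V(G)|$ and exploiting the derangement hypothesis to color an entire independent set with a single color. Write $D_S$ for the kernel-perfect biorientation of $G_S$ induced by $D$. If $V(G)=\emptyset$ we are done; otherwise the hypothesis $|L(v)|\ge d^+_D(v)+1\ge 1$ guarantees every list is nonempty, so we may fix a color $c$ lying in some list and set $V_c=\{v\in V(G): c\in L(v)\}\neq\emptyset$. Since $D_S$ is kernel-perfect, the induced subdigraph $D_S[V_c]$ has a kernel $U$, which is nonempty because $V_c$ is. The first key step is to observe that assigning color $c$ to every vertex of $U$ is a legal partial $(L,C)$-coloring: $U$ is independent in $D_S[V_c]$, so there are no straight edges inside $U$, while for any twisted edge $e$ inside $U$ the pair $\{(u,c),(u',c)\}$ is non-adjacent in $C_e$ precisely because $C_e$ is a partial derangement. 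This is where \Cref{def:derangement} does its work.

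Next I would pass to $G-U$, define $L'(v)$ by deleting from $L(v)$ every color forbidden by the coloring of $U$, restrict $C$ to the surviving colors to obtain $C'$, and take the induced biorientation $D-U$. To keep the straight/twisted partition clean I would also discard any formerly twisted edge whose restricted matching has become empty (such an edge imposes no constraint, so colorability is unaffected, and discarding it only lowers out-degrees). After this cleanup every straight edge of $(L',C')$ is an old straight edge and every twisted edge is an old twisted edge retaining a matched pair, so $(L',C')$ is again a derangement assignment; the straight part of $D-U$ is the induced subdigraph $D_S[V(G)\setminus U]$, hence kernel-perfect; and the twisted edges, being old twisted edges, remain in $E_{(D-U)2}(G-U)$.

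The heart of the argument, and the step I expect to be the main obstacle, is verifying that the invariant $|L'(v)|\ge d^+_{D-U}(v)+1$ survives for every $v\notin U$. For this I would bound, on one hand, the number of colors deleted from $L(v)$ and, on the other, the drop in $d^+_D(v)$ caused by removing $U$. Every straight edge from $v$ to $U$ forbids the \emph{same} color $c$, so all such edges together delete at most one color; meanwhile each of the $t_v$ twisted edges joining $v$ to $U$ deletes at most one color, necessarily a color different from $c$ by the derangement property. Thus at most $1+t_v$ colors leave $L(v)$. On the other hand, each twisted edge to $U$ is bidirected and so accounts for one destroyed out-edge of $v$, and when $c\in L(v)$ the kernel property supplies a straight \emph{out}-edge from $v$ into $U$; hence $d^+_D(v)$ drops by at least $1+t_v$ (and by at least $t_v$ when $c\notin L(v)$, in which case no straight color is deleted either). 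Comparing the two bounds shows the list size never decreases faster than the out-degree, so the invariant is preserved.

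With all hypotheses re-established for $(G-U,L',C',D-U)$, the induction yields an $(L',C')$-coloring $\varphi$ of $G-U$. I would finish by checking that extending $\varphi$ by $c$ on $U$ is a genuine $(L,C)$-coloring: edges inside $U$ and inside $G-U$ are already handled, and for a cross edge $e=\{u,v\}$ with $u\in U$, the color removed from $L'(v)$ during the restriction---namely $c$ if $e$ is straight, or the $C_e$-partner of $(u,c)$ if $e$ is twisted---is exactly the color that would conflict, so $\varphi(v)$ cannot collide with $(u,c)$ in $C_e$. This completes the proof.
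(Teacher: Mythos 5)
Your proposal is correct and follows essentially the same route as the paper's proof: induction on $|V(G)|$, taking a kernel $U$ of the straight part restricted to $V_c$, coloring $U$ monochromatically via the derangement property, pruning the formerly twisted edges whose matchings become empty, and checking that list sizes shrink no faster than out-degrees. The only cosmetic difference is in the key counting step, where you bound the number of deleted colors and the drop in out-degree separately by $1+t_v$ (or $t_v$ when $c\notin L(v)$), whereas the paper injects the set $B(w)$ of deleted colors directly into the set of out-edges from $w$ into $U$; both arguments are sound.
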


\begin{proof} We prove the claim by induction on $|V(G)|$. The claim is trivial if $|V(G)|=1$. Suppose that $V(G)=n\geq 2$ and that the claim holds for all graphs on fewer than $n$ vertices. Let $(L,C)$ be a derangement assignment for $G$, and suppose $D$ is a biorientation such that $E_T(G)\subseteq E_{D2}(G)$, $D$ induces a kernel perfect biorientation on $G_S$, and that $|L(v)|\geq d^+_D(v)+1$ for each $v\in V(G)$. Let $a\in \bigcup_{v\in V}L(v)$, and let $V_a=\{v\in V:a\in L(v)\}$. Then $G_S[V_a]$ has a kernel, $U$. Note that if we consider $G[U]$ with the correspondence assignment induced on it by $(L,C)$, then we can assign the color $a$ to each $u\in U$ to get a correspondence coloring of $G[U]$, because the only edges in $G[U]$ are twisted, and are therefore partial derangements. 

Now, for each $w\in V(G)\setminus U$, define  $$B(w)= \{b\in L(w): \exists e=\{w,u\}\in E(G) \text{ with } u\in U \text{ and }\{(w,b),(u,a)\}\in C_{e}\},$$ and define $L'(w)=L(w)\setminus B(w)$. 
 
 Fix $w\in V\setminus U$. We observe that there are at least $|B(w)|$ out-edges (in $D$) from $w$ to vertices in $U$: Suppose $b\in B(w)$. If $\{(w,b),(u,a)\}\in C_{e}$ and $e=\{w,u\}\in E_S(G)$ with $u\in U$, then $b=a$, so $w\in V_a$, and therefore there exists an outgoing edge $\vec{e}_b=(w,u_b)$ from $w$ to some vertex $u_b\in U$, because $U$ is a kernel of $G[V_a]$. 
 Now, if $b'\in B(w)$, and $\{(w,b'),(u_{b'},a)\}\in C_{e}$ for some $e=\{w,u_{b'}\}$ with $u_{b'}\in U$, and $e=\{w,u_{b'}\}\in E_T(G)$, then
 $\vec{e}_{b'}=(w,u_{b'})$ is an edge of $D$ because $E_T(G)\subseteq E_{D2}(G)$. Furthermore, if $b_1,b_2\in B(w)$ and $b_1\neq b_2$, then $\vec{e}_{b_1}\neq \vec{e}_{b_2}$ because each $C_e$ is a matching. Hence there are at least $|\{\vec{e}_b:b\in B(w)\}|=|B(w)|$ distinct out-edges from $w$ to vertices in $U$. Therefore, $d^+_{D-U}(w)+1\leq |L'(w)|$ for each $w\in V(G-U)$.

Now, consider $G-U$ with the correspondence matchings induced by $C$ on the restricted vertex lists $L'(w)$, and call this induced correspondence assignment $C'$.  Note that restricting the edge matchings to the new lists $L'$ can possibly make an edge that was twisted with respect to $(L,C)$ into an edge that is straight with respect to $(L',C')$ if its matching becomes empty, but since empty matchings do not create any restrictions on the coloring, we can ignore these edges; let $E^*$ be the set of edges in $G-U$ that are twisted with respect to $(L,C)$ and straight with respect to $(L',C')$. Let $(G-U)^*$ be the graph with vertex set $V\setminus U$ and edge set $E(G-U)\setminus E^*$. Let $C^*\subseteq C'$ be the correspondence assignment given by restricting $C'$ only to the edges of $(G-U)^*$, and consider the correspondence assignment $(L',C^*)$ on $(G-U)^*$.  Then $(G-U)^*_S$ is an induced subgraph of $G_S$, and therefore the biorientation on $(G-U)^*_S$ induced by $D$ is kernel-perfect. Hence, by the inductive hypothesis, there is an $(L',C^*)$-coloring of $(G-U)^*$, and the same coloring is an $(L',C')$-coloring of $G-U$.

This coloring of $G-U$ does not conflict with our coloring of the vertices of $U$ (with respect to the original correspondence assignment $C$), because we deleted all of the colors that were matched to the color $a$ when we formed the lists $L'$. Hence, combining our colorings of $U$ and $G-U$, we get an $(L,C)$ coloring of $G$, as desired. 
\end{proof}

If $G$ is equipped with a correspondence assignment $(L,C)$ with the standard list correspondence induced by the names of the list elements, then $E_T=\emptyset$, and the statement of \Cref{thm:BBSDP} just becomes the statement of the original Bondy-Boppana-Siegel lemma for list colorings.

Furthermore, M. Richardson showed in \cite{MR0075184} that any orientation without directed odd cycles has a kernel.  Since every induced subgraph of such an orientation will also be without directed odd cycles, an orientation of this type is also kernel-perfect. Hence, if $G$ is a graph equipped with a derangement assignment and $D$ is an orientation of $G$ such that every directed odd cycle contains a twisted edge, we have that $G_S$ is kernel perfect. This yields the following corollary of \Cref{thm:BBSDP}. 

\begin{cor}\label{cor:richextended}
    Suppose $G$ is a graph and $(L,C)$ is a derangement assignment on $G$. If there exists and orientation $D$ of $G$ such that $E_T\subseteq E_{D2}$, $|L(v)|\geq d^+_{D}(v)+1$ for each $v\in V$, and every odd directed cycle of $D$ contains a twisted edge, then $G$ is $(L,C)$-colorable. 
\end{cor}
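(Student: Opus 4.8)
The plan is to obtain this directly from \Cref{thm:BBSDP} by checking its one nontrivial hypothesis—that $D$ induces a kernel-perfect biorientation of $G_S$—via Richardson's theorem. The conditions $E_T\subseteq E_{D2}$ and $|L(v)|\geq d^+_D(v)+1$ appear verbatim in both statements, so the only thing to supply is kernel-perfectness of the induced biorientation on $G_S$. I will read $D$ as a biorientation in which every twisted edge is doubled (ensuring $E_T\subseteq E_{D2}$) while every straight edge receives a single direction, so that the restriction $D[G_S]$ to the straight edges is a genuine orientation with no bidirected edge; this is the reading consistent with the discussion preceding the corollary.

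First I would observe that every directed cycle of $D[G_S]$ is a directed cycle of $D$ that uses only straight edges, and hence contains no twisted edge. By hypothesis every directed odd cycle of $D$ contains a twisted edge, so no directed cycle of $D[G_S]$ can be odd. Thus $D[G_S]$ is an orientation with no directed odd cycle.

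Next I would invoke Richardson's theorem \cite{MR0075184}: any orientation without directed odd cycles has a kernel. Since deleting vertices cannot create new directed cycles, every induced subgraph of $D[G_S]$ is again an orientation free of directed odd cycles and therefore also has a kernel; hence $D[G_S]$ is kernel-perfect. This is precisely the remaining hypothesis of \Cref{thm:BBSDP}, so that theorem applies and yields that $G$ is $(L,C)$-colorable.

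I do not expect a serious obstacle here, as the corollary is essentially a specialization of \Cref{thm:BBSDP}. The only point demanding care is the reduction to $G_S$: one must notice that a directed cycle lying entirely in $G_S$ carries no twisted edge, which is exactly what converts the hypothesis ``every odd directed cycle contains a twisted edge'' into ``$D[G_S]$ has no odd directed cycle,'' the precise input required by Richardson's theorem. Once the biorientation $D$ is correctly restricted to the straight edges, the rest is immediate.
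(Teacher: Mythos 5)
Your proof is correct and takes essentially the same route as the paper: observe that a directed cycle of the restriction of $D$ to $G_S$ contains no twisted edge and hence cannot be odd, invoke Richardson's theorem together with the fact that induced subgraphs of an odd-directed-cycle-free digraph remain odd-directed-cycle-free to conclude kernel-perfectness, and then apply \Cref{thm:BBSDP}. The only minor remark is that your stipulation that each straight edge receive a single direction is unnecessary: a doubly oriented straight edge creates only an even (length-$2$) directed cycle, so Richardson's theorem applies regardless.
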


It is worth noting that the proof in \cite{MR0075184} gives a polynomial time algorithm for finding kernels in directed graphs with no odd directed cycles, and that this algorithm, combined with the coloring process given by the proof of \Cref{thm:BBSDP}, yields an algorithm to DP-color graphs satisfying the hypothesis of \Cref{cor:richextended}. 

\section{An Application to Colorings of Signed Graphs}
In this section, we give an application of \Cref{thm:BBSDP} to signed graphs. A signed graph is a pair $(G,\sigma)$, where $G$ is a graph and $\sigma:E(G)\rightarrow \{-1,1\}$.  A (signed) coloring in $k$-colors of $(G,\sigma)$, as defined in \cite{MR0675866}, is a function $\psi:V(G)\rightarrow \{-k,-k+1,...-1,0,1,...,k-1,k\}$ such that for each $e=\{v,w\}\in E(G)$, we have that $\psi(v)\neq \sigma(e)\psi(w)$. A signed coloring is called zero-free if for all $v\in V$, $\psi(v)\neq 0$.

In \cite{JIN2016234}, the idea of signed colorings was extended to signed list colorings. Given a signed graph, $(G,\sigma)$, and an assignment of lists $L=\{L(v)\}_{v\in V}$, with $L(v)\subseteq \mathbb{Z}$ for each $v$, an $(L,\sigma)$-coloring is a function $\psi:V(G)\rightarrow \bigcup_{v\in V}L(v)$ such that $\psi(v)\in L(v)$ for each $v$, and for each $e=\{v,w\}\in E(G)$, we have that $\psi(v)\neq \sigma(e)\psi(w)$. If $(G,\sigma)$ admits an $(L,\sigma)$-coloring, we say $(G,\sigma)$ is $L$-colorable. 

Given a signed graph $(G,\sigma)$, let $E^+(G)\subset E(G)$ be the set of edges $e\in E(G)$ such that $\sigma(e)=1$, and $E^-(G)\subseteq E(G)$ be the set of edges $e\in E(G)$ such that $\sigma(e)=-1$. Define $G^+=(V,E^+(G))$, and $G^-=(V,E^-(G))$.

\begin{cor} \label{cor:BBSsigned} Suppose that $(G,\sigma)$ is a signed graph and $D$ is a biorientation of $G$ such that each $e\in E^-(G)$ is oriented in both directions in $D$, and $D$ induces a kernel perfect biorientation of $G^+$. If $L=\{L(v)\}_{v\in V(G)}$ is a list assignment such that $L(v)\subseteq \mathbb{Z}\setminus \{0\}$ and $|L(v)|\geq d^+_{D}(v)+1$ for each $v\in V(G)$, then $(G,\sigma)$ is $L$-colorable.  \end{cor}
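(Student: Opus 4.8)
The plan is to encode $(L,\sigma)$-colorings as DP-colorings from a derangement assignment and then quote \Cref{thm:BBSDP}. I would keep the lists $L$ and build matchings $C=\{C_e\}_{e\in E(G)}$ that record the forbidden pairs: for each edge $e=\{v,w\}$, set $\{(v,c_1),(w,c_2)\}\in C_e$ exactly when $c_1=\sigma(e)c_2$. Since $\sigma(e)\in\{-1,1\}$, the color $c_1\in L(v)$ can only be matched to $\sigma(e)c_1$, so each $C_e$ is a partial matching between $\{v\}\times L(v)$ and $\{w\}\times L(w)$. Unwinding \Cref{def:cor}, a map $\varphi$ with $\varphi(v)\in L(v)$ is an $(L,C)$-coloring precisely when $\varphi(v)\neq\sigma(e)\varphi(w)$ for every $e=\{v,w\}$, i.e. precisely when it is an $(L,\sigma)$-coloring; so it is enough to produce an $(L,C)$-coloring of $G$.

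I would then identify the straight and twisted edges and check the derangement property. A positive edge matches colors to themselves and is straight, so $E^+(G)\subseteq E_S(G)$; a negative edge matches $(v,c)$ to $(w,-c)$ and, when this matching is nonempty, is twisted, so $E_T(G)\subseteq E^-(G)$. The hypothesis $0\notin L(v)$ enters exactly here: on a twisted (negative) edge $e=\{v,w\}$, a pair $\{(v,c),(w,c)\}$ would lie in $C_e$ only if $c=-c$, forcing $c=0$, which is excluded. Thus each twisted $C_e$ is a partial derangement and $(L,C)$ is a derangement assignment in the sense of \Cref{def:derangement}.

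The only genuine subtlety is that $G_S$ can be strictly larger than $G^+$: a negative edge $e=\{v,w\}$ with $L(v)\cap(-L(w))=\emptyset$ has empty matching and is therefore vacuously straight, landing in $E_S(G)$, whereas the hypothesis only gives a kernel-perfect biorientation of $G^+$. I would remove this obstruction up front by deleting every such edge. Such an edge contributes only the constraint $\varphi(v)\neq-\varphi(w)$, which holds automatically since $\varphi(v)\in L(v)$ and $-\varphi(w)\in-L(w)$ come from disjoint sets; hence deletion affects neither the lists nor colorability. Writing $G^\circ$ for the graph after deletion and $D^\circ$ for the restriction of $D$ to $G^\circ$, every remaining negative edge has nonempty matching and is twisted, so $(G^\circ)_S=G^+$; moreover $D^\circ$ agrees with $D$ on $G^+$ and so still induces a kernel-perfect biorientation of $(G^\circ)_S$, and $d^+_{D^\circ}(v)\leq d^+_D(v)$ preserves the inequality $|L(v)|\geq d^+_{D^\circ}(v)+1$.

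To finish, I would check the last hypothesis: every negative edge of $G^\circ$ lies in $E^-(G)$ and is oriented in both directions in $D^\circ$ by assumption, so $E_T(G^\circ)\subseteq E_{D^\circ 2}(G^\circ)$. Now \Cref{thm:BBSDP} applies to $G^\circ$ with the induced derangement assignment and the biorientation $D^\circ$, yielding an $(L,C)$-coloring, which by the first paragraph is an $(L,\sigma)$-coloring of $(G,\sigma)$. I expect the main obstacle to be precisely the discrepancy between $G_S$ and $G^+$ created by empty matchings on negative edges; everything else is a direct translation, and once those edges are deleted the hypotheses of \Cref{thm:BBSDP} line up exactly.
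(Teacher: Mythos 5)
Your proposal is correct and follows essentially the same route as the paper: encode the signed constraints as matchings $C_e$ pairing $c_1$ with $\sigma(e)c_2$, observe that positive edges are straight and negative edges are partial derangements because $0\notin L(v)$, and invoke \Cref{thm:BBSDP}. The one place you go beyond the paper is your third paragraph: the paper's proof silently identifies $G_S$ with $G^+$, but as you note, a negative edge whose matching is empty is vacuously straight under the paper's definition, so $G_S$ may strictly contain $G^+$, and the hypothesis only guarantees kernel-perfection of the biorientation induced on $G^+$ (adding extra bidirected edges does not obviously preserve kernel-perfection, since kernels must be independent). Your fix --- deleting these edges first, since an empty matching imposes no constraint, and checking that the remaining hypotheses of \Cref{thm:BBSDP} survive the deletion --- is correct and closes a small gap that the paper's own proof of the corollary passes over (even though the analogous issue is handled explicitly inside the proof of \Cref{thm:BBSDP} itself).
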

\begin{proof} Given a signed graph $(G,\sigma)$, and a list assignment $L=\{L(v)\}_{v\in V}$, with $L(v)\subseteq \mathbb{Z}\setminus \{0\}$ for each $v$, define the partial matchings $C_e$ as follows. If $e=\{v,w\}$ and $\sigma(e)=1$, then $\{(v,c_1),(v,c_2)\}\in C_e$ if and only if $c_1=c_2$. If $\sigma(e)=-1$, then $\{(v,c_1),(w,c_2)\}\in C_e$ if and only if $c_1=-c_2$. Then a function $\varphi:V\rightarrow \bigcup_{v\in V}L(v)$ is an $(L,C)$-coloring of $G$ if and only if it is a signed $L$-coloring. Furthermore, if $e\in E(G)$ is such that $\sigma(e)=1$,then $e$ is straight with respect to $(L,C)$, and if $\sigma(e)=-1$, then $C_e$ is a partial derangement, since $n\neq -n$ for all $n\in \mathbb{Z}\setminus \{0\}$. Since every twisted edge in $G$ is a partial derangement, $(L,C)$ is a derangement assignment, and since $E_T(G)\subseteq E^-(G)$, the claim follows from \Cref{thm:BBSDP}.

\end{proof}

Analogously to \Cref{cor:richextended}, we have the following for zero-free signed list colorings. 

\begin{cor} \label{cor:richsigned} Suppose that $(G,\sigma)$ is a signed graph and $D$ is a biorientation of $G$ such that each $e\in E^-(G)$ is oriented in both directions in $D$, and that $D$ induces an orientation on $G^+$ that has no odd directed cycles. If $L=\{L(v)\}_{v\in V(G)}$ is a list assignment such that $L(v)\subseteq \mathbb{Z}\setminus \{0\}$ and $|L(v)|\geq d^+_{D}(v)+1$ for each $v\in V(G)$, then $(G,\sigma)$ is $L$-colorable.  \end{cor}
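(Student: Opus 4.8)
The plan is to deduce this statement directly from \Cref{cor:BBSsigned}, with the only new ingredient being Richardson's theorem \cite{MR0075184} converting the no-odd-directed-cycle hypothesis into kernel-perfectness. The two corollaries are related in exactly the same way that \Cref{cor:richextended} is related to \Cref{thm:BBSDP}: \Cref{cor:BBSsigned} requires that $D$ induce a \emph{kernel-perfect} biorientation of $G^+$, whereas here we are handed the ostensibly stronger-looking but combinatorially more convenient hypothesis that $D$ induces an \emph{orientation} of $G^+$ with no odd directed cycles. So the entire task is to show that this hypothesis implies the hypothesis of \Cref{cor:BBSsigned}.

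First I would observe that the hypothesis gives an orientation (not merely a biorientation) of $G^+$, so every edge of $G^+$ is oriented in a single direction in $D$. By Richardson's theorem, any orientation with no odd directed cycles has a kernel; and since deleting vertices cannot create an odd directed cycle, every induced subgraph of this orientation is again free of odd directed cycles and hence also has a kernel. Therefore the orientation $D$ induces on $G^+$ is kernel-perfect. Because an orientation is a special case of a biorientation, this is precisely the assertion that $D$ induces a kernel-perfect biorientation of $G^+$.

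With that observation in hand, I would simply verify that the remaining hypotheses of \Cref{cor:BBSsigned} are met verbatim: each $e\in E^-(G)$ is oriented in both directions in $D$ by assumption, the lists satisfy $L(v)\subseteq\mathbb{Z}\setminus\{0\}$ and $|L(v)|\geq d^+_D(v)+1$ for each $v$, and we have just established kernel-perfectness of the induced biorientation of $G^+$. Applying \Cref{cor:BBSsigned} then yields that $(G,\sigma)$ is $L$-colorable, which is exactly the claim.

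I do not expect any genuine obstacle here, since the substantive work has already been carried out in \Cref{thm:BBSDP} and \Cref{cor:BBSsigned}. The only point requiring minor care is the remark that no-odd-directed-cycle is an induced-hereditary property, so that Richardson's single-graph kernel existence statement upgrades to kernel-perfectness; this is the same step used to obtain \Cref{cor:richextended} and is immediate.
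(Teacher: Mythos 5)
Your proposal is correct and matches the paper's intended argument: the paper derives \Cref{cor:richsigned} from \Cref{cor:BBSsigned} in exactly the way \Cref{cor:richextended} follows from \Cref{thm:BBSDP}, namely by using Richardson's theorem together with the fact that having no odd directed cycles is preserved under taking induced subgraphs to obtain kernel-perfectness of the induced orientation of $G^+$. Nothing further is needed.
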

\section{Further Discussion on Derangement Assignments} \label{sec:mot}

We discuss some questions that may motivate further study of derangement assignments and their differences from standard list colorings. Let $G=(V,E)$ be a graph, and suppose that $E_1,E_2\subseteq E$ with $E=E_1 \cup E_2$ and $E_1\cap E_2 =\emptyset$. Let $G_1=(V,E_1)$, and $G_2=(V,E_2)$. Suppose that we know that $G_1$ is $f_1$-choosable, for some function $f_1:V\rightarrow \mathbb{Z}$. It is interesting to try to understand how we might use the $f_1$-choosability of $G_1$ to make some conclusion about the choosability of $G$, as a whole. For example, one might ask the following question. 
If $G_1$ is $f_1$-choosable, is it true that $G$ is $f$-choosable, where $f:V\rightarrow \mathbb{Z}$ is defined by $f(v)=f_1(v)+d_{G_2}(v)$? The answer to this question, in general, is \textit{no}, as can be seen from the factorization of $K_4$ shown in \Cref{fig:k4edgedecomp}; $G_1$ is $2$-choosable, and $d_{G_2}(v)=1$ for each $v\in V$, but $G=K_4$ is not $3$-choosable.  However, if we instead consider derangement assignments on $G$ such that $E_1=E_S(G)$ and $E_2=E_T(G)$, we will see that, as a consequence of \Cref{thm:BBSDP}, we get a more pleasing answer.


\vspace{.1cm}
\begin{figure}[h]
\caption{}
\label{fig:k4edgedecomp}
\begin{center}
\begin{tikzpicture}[scale=2.5]

\tikzset{vertex/.style = {shape=circle,fill=black, draw, inner sep=0pt,minimum size=2.5mm}}
\tikzset{edge/.style = {-
}}

\node[] at (.5,-.25) {$G$};
\node[vertex] (a) at  (0,0) {};
\node[vertex] (b) at  (0,1) {};
\node[vertex] (c) at  (1,1) {};
\node[vertex] (d) at  (1,0) {};
\draw[edge] (a) to (b);
\draw[edge] (b) to (c);
\draw[edge] (c) to (d);
\draw[edge] (d) to (a);

\draw[edge] (a) to (c);
\draw[edge] (b) to (d);

\end{tikzpicture}\hspace{1.2cm}
\begin{tikzpicture}[scale=2.5]

\tikzset{vertex/.style = {shape=circle,fill=black, draw, inner sep=0pt,minimum size=2.5mm}}
\tikzset{edge/.style = {-
}}
\node[] at (.5,-.25) {$G_1$};
\node[vertex] (a) at  (0,0) {};
\node[vertex] (b) at  (0,1) {};
\node[vertex] (c) at  (1,1) {};
\node[vertex] (d) at  (1,0) {};
\draw[edge] (a) to (b);
\draw[edge] (b) to (c);
\draw[edge] (c) to (d);
\draw[edge] (d) to (a);


\end{tikzpicture}\hspace{1.2cm}
\begin{tikzpicture}[scale=2.5]

\tikzset{vertex/.style = {shape=circle,fill=black, draw, inner sep=0pt,minimum size=2.5mm}}
\tikzset{edge/.style = {-
}}
\node[] at (.5,-.25) {$G_2$};
\node[vertex] (a) at  (0,0) {};
\node[vertex] (b) at  (0,1) {};
\node[vertex] (c) at  (1,1) {};
\node[vertex] (d) at  (1,0) {};

\draw[edge] (a) to (c);
\draw[edge] (b) to (d);

\end{tikzpicture}
\end{center} 
\end{figure}


We relate the above question to biorientations. If we equip $G_1$ and $G_2$ with the biorientations $D_1$ and $D_2$ in \Cref{fig:k4edgedirected}, then $d^+_{D_1}(v)=1$ and $d^+_{D_2}(v)=d_{G_2}(v)$ for each $v\in V$. Thus, if $f_1$ is defined by $f_1(v)=d^+_{D_1}(v)+1=2$, we see that $f(v)=f_1(v)+d_{G_2}(v)=d^+_D(v)+1$, for each $v$, where $D$ is the biorientation of $G$ such that $\vec{E}(D)=\vec{E}(D_1)\cup \vec{E}(D_2)$.  Hence, with this setup, our question becomes one about DP-colorability from lists of length at least $d^+_{D}(v)+1$, where $D$ is a biorientation with the set of edges $E_{D2}$ specified. 

\begin{figure}[h]
\caption{}
\label{fig:k4edgedirected}
\vspace{.1cm}
\begin{center}

\begin{tikzpicture}[scale=2.5]

\tikzset{vertex/.style = {shape=circle,fill=black, draw, inner sep=0pt,minimum size=2.5mm}}
\tikzset{edge/.style = {-{Stealth[scale=1.8]}
}}
\node[] at (.5,-.25){$D_1$};
\node[vertex] (a) at  (0,0) {};
\node[vertex] (b) at  (0,1) {};
\node[vertex] (c) at  (1,1) {};
\node[vertex] (d) at  (1,0) {};
\draw[edge] (a) to (b);
\draw[edge] (b) to (c);
\draw[edge] (c) to (d);
\draw[edge] (d) to (a);


\end{tikzpicture}\hspace{1.2cm}
\begin{tikzpicture}[scale=2.5]

\tikzset{vertex/.style = {shape=circle,fill=black, draw, inner sep=0pt,minimum size=2.5mm}}
\tikzset{edge/.style = {-{Stealth[scale=1.8]}
}}
\node[] at (.5,-.25){$D_2$};
\node[vertex] (a) at  (0,0) {};
\node[vertex] (b) at  (0,1) {};
\node[vertex] (c) at  (1,1) {};
\node[vertex] (d) at  (1,0) {};

\draw[edge] (a) to[bend right= 20] (c);
\draw[edge] (c) to[bend right= 20] (a);
\draw[edge] (b) to[bend right= 20] (d);
\draw[edge] (d) to[bend right= 20] (b);

\end{tikzpicture}\hspace{1.2cm}
\begin{tikzpicture}[scale=2.5]

\tikzset{vertex/.style = {shape=circle,fill=black, draw, inner sep=0pt,minimum size=2.5mm}}
\tikzset{edge/.style = {-{Stealth[scale=1.8]}
}}
\node[] at (.5,-.25){$D$};
\node[vertex] (a) at  (0,0) {};
\node[vertex] (b) at  (0,1) {};
\node[vertex] (c) at  (1,1) {};
\node[vertex] (d) at  (1,0) {};
\draw[edge] (a) to (b);
\draw[edge] (b) to (c);
\draw[edge] (c) to (d);
\draw[edge] (d) to (a);

\draw[edge] (a) to[bend right= 20] (c);
\draw[edge] (c) to[bend right= 20] (a);
\draw[edge] (b) to[bend right= 20] (d);
\draw[edge] (d) to[bend right= 20] (b);

\end{tikzpicture}
\end{center} 
\end{figure}

Since $D_1$ in \Cref{fig:k4edgedirected} is a kernel-perfect orientation such that $f_1(v)\geq d^+_{D_1}(v)+1$ for each $v\in V$, we see that even if we restrict to functions $f_1$ such that $G_1$ is $f_1$-choosable and such that there exists some kernel-perfect orientation $D'$ of $G_1$ with $f_1(v)\geq d^+_{D'}(v)+1$ for all $v\in V$, the answer to our question is still \textit{no}. However, in the case of derangement assignments, we have the following corollary of \Cref{thm:BBSDP}.

\begin{cor} \label{cor:chooseversion}Let $(L,C)$ be a derangment assignment on a multigraph $G=(V,E)$. Suppose that $f_S:V\rightarrow \mathbb{N}$ is such that there exists a kernel-perfect biorientation $D_S$ of $G_S$ with $f_S(v)\geq d^+_{D_S}(v)+1$ for each $v\in V$. If $|L(v)|\geq f_S(v)+d_{G_T}(v)$ for each $v\in V$, then $G$ is $(L,C)$-colorable.
\end{cor}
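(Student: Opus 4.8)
The plan is to deduce this directly from \Cref{thm:BBSDP} by manufacturing, out of the given kernel-perfect biorientation $D_S$ of $G_S$, a biorientation $D$ of all of $G$ that meets the three hypotheses of that theorem. The point is that $D_S$ already handles the straight part of $G$ kernel-perfectly, so I only need to decide how to orient the twisted edges; and the cheapest way to guarantee $E_T(G)\subseteq E_{D2}(G)$ is to orient every twisted edge in both directions.

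Concretely, I would define $D$ to be the biorientation of $G$ whose directed edges are exactly the directed edges of $D_S$ (on the edges of $G_S$) together with both orientations $\vec{e}$ and $\cev{e}$ of each twisted edge $e\in E_T(G)$. A quick check confirms this is a legitimate biorientation of $G$: every edge receives at least one direction (straight edges inherit at least one from the biorientation $D_S$, twisted edges receive both), and no new adjacencies are introduced, so non-adjacent vertices of $G$ remain non-adjacent in $D$.

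Next I would verify the three hypotheses of \Cref{thm:BBSDP}. Since the straight edges of $G$ are oriented in $D$ exactly as in $D_S$, the biorientation of $G_S$ induced by $D$ is precisely $D_S$, which is kernel-perfect by assumption. By construction every twisted edge lies in $E_{D2}(G)$, so $E_T(G)\subseteq E_{D2}(G)$. Finally, I would compute out-degrees: each twisted edge incident to $v$, being oriented in both directions, contributes exactly one out-edge at $v$, so $d^+_D(v)=d^+_{D_S}(v)+d_{G_T}(v)$. Combining this with the hypotheses $f_S(v)\geq d^+_{D_S}(v)+1$ and $|L(v)|\geq f_S(v)+d_{G_T}(v)$ gives
\[
 d^+_D(v)+1 = d^+_{D_S}(v)+1+d_{G_T}(v) \leq f_S(v)+d_{G_T}(v)\leq |L(v)|
\]
for every $v\in V$. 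With all three hypotheses in hand, \Cref{thm:BBSDP} applies to $(L,C)$ and $D$ and yields the desired $(L,C)$-coloring of $G$.

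I do not anticipate a genuine obstacle: the argument is a one-shot reduction to \Cref{thm:BBSDP}, and the only place demanding care is the out-degree bookkeeping—specifically, confirming that orienting a twisted edge in both directions adds exactly $1$ (not $2$) to the out-degree at each endpoint, and that the orientation induced on $G_S$ is left untouched by whatever we do to the twisted edges, so that the kernel-perfectness supplied by $D_S$ transfers verbatim.
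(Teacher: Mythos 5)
Your proof is correct and follows exactly the reduction the paper intends (the paper leaves this corollary without an explicit proof, but the preceding discussion of the biorientations $D_1$, $D_2$, and $D$ makes clear that the intended argument is precisely yours: bidirect every twisted edge, keep $D_S$ on the straight edges, and note $d^+_D(v)=d^+_{D_S}(v)+d_{G_T}(v)$). The out-degree bookkeeping and the observation that the induced biorientation on $G_S$ is untouched are both handled correctly.
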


We wonder if \Cref{thm:BBSDP} and \Cref{cor:chooseversion} can be strengthened and if other closely related statements might be true. We pose two questions.

  \begin{que} Suppose $(L,C)$ is a derangement assignment on $G$,  $f_S:V\rightarrow \mathbb{N}$, and that $G_S$ is $f_S$ choosable. Is it true that if $|L(v)|\geq f_S(v)+d_{G_T}(v)$ for each $v$, then $G$ must be $(L,C)$-colorable?  
  \end{que}

  \begin{que} Suppose that $D$ is a kernel-perfect biorientation of $G$, and $(L,C)$ is a derangement assignment on $G$ such that $E_T(G)\subseteq E_{D2}$. Is it true that if $|L(v)|\geq d^+_{D}(v)+1$ for each $v\in V$, then $G$ must be $(L,C)$-colorable? 
  \end{que}
  
Recall also that that DP-colorings generalize several earlier variants of coloring problems. It would be interesting to find out if there are other previously defined variants of colorings, other than zero-free signed colorings, that can be seen as special cases of derangement assignments. Finally, we note that in \cite{MR3758240}, a notion of \textit{equivalent} correspondence assignments was introduced: two correspondence assignments $(L,C)$ and $(L',C')$ are equivalent if one can be obtained from the other by `renaming' of the list elements, while preserving the structure of the matchings. If $(L,C)$ and $(L',C')$ are equivalent correspondence assignments on a graph $G$, then $G$ is $(L,C)$-colorable if and only if it is $(L',C')$-colorable. Thus, it would be interesting to study whether there is some way of methodically relabeling arbitrary correspondence assignments for specified classes of graphs so that the results from this work might be applicable.

\bibliographystyle{plain}
\bibliography{references}     

\vspace{1cm}
 
Email: igossett@oberlin.edu\

Department of Mathematics, Oberlin College, Oberlin, OH, 44074, USA.

\end{document}